\newtheorem*{theorem*}{Theorem}
\newtheorem{definition}{Definition}
\newtheorem{theorem}{Theorem}[section]
\newtheorem{lemma}[theorem]{Lemma}
\newtheorem*{conjecture*}{Conjecture}
\title{An Application of the Theory of Viscosity Solutions to Higher Order Differential Equations}
\author{Matei P. Coiculescu \thanks{Supported by an N.S.F. Graduate Research Fellowship}}
\begin{document}
\maketitle
\begin{abstract}
We directly apply the theory of viscosity solutions to partial differential equations of order greater than two. 
We prove that there exists a solution in $C^{2,\alpha}(B_R)\cap C(\overline{B_R})$ for the inhomogeneous $\infty$-Bilaplacian equation on a ball $B_R\subset \mathbb{R}^n$:
$$\Delta_\infty^2 u:=(\Delta u)^3 \abs{D(\Delta u)}^2 =f(x)$$
with Navier Boundary conditions ($u=g\in C(\partial B_R), \Delta u =0 \textrm{ on } \partial B_R$). We also prove that there exists a solution in $C^{1,\alpha}(\mathbb{R}^n)$ for all $\alpha>0$ to the eigenvalue problem on $\mathbb{R}^n$:
$$\Delta_\infty^2 u =-\lambda u+f(x)$$
whenever $n\geq 3, \lambda<0,$ and $f(x)$ is continuous, bounded, and supported on an annulus. 
\end{abstract}
\section{Introduction}

The development of the theory of viscosity solutions for solving fully nonlinear partial differential equations has lead to strikingly general existence, uniqueness, and regularity results for second order problems. On the other hand, since the definition of a viscosity solution depends on what is essentially a second order phenomenon (i.e. some version of the maximum principle), the direct application of this theory has always been limited to second order differential equations. In this paper, we examine compositions of differential operators of order less than or equal to two and directly apply the theory of viscosity solutions to these higher order differential equations that are in this "factored" form.

Generally, we have differential operators $L_0, L_1,\ldots, L_k$ of order less than or equal to two, and we consider the equation
$$(L_k \circ \cdots \circ L_1\circ L_0) u=0.$$
Commonly, when $L_k$ consists only of pure derivatives, an equation of this form is said to be in divergence form. Thus, the differential equations we deal with are in a form related to, yet more general than equations in divergence form.

We must also take care in applying the theory of viscosity solutions to such a composition of operators, since viscosity solutions encode only "indirect" information and local behavior of the respective equation. Indeed, we can observe that the use of the theory of viscosity solutions for solving $L_j$ requires the use of viscosity solutions for all $L_{j+i}$ with  $i\geq 0$ as well.

We provide an explicit application of the theory of viscosity solutions to a higher order differential equation by solving the inhomogeneous $\infty$-Bilaplacian equation:
$$\Delta_\infty^2 u:=(\Delta u)^3 \abs{D(\Delta u)}^2 =f(x).$$
This equation is the formal limit as $p\to \infty$ of the $p$-Bilaplacian equation
$$\Delta_p^2 u = \Delta(\abs{\Delta u}^{p-2}\Delta u)= f(x),$$
which arises from the minimization problem for the $L^p$ norm of $\Delta u$. In other words, the $p$-Bilaplacian is the Euler-Lagrange equation for the functional
$$\int_\Omega \abs{\Delta u}^p dx.$$
Minimizers are sought in the space $W^{2,p}(\Omega)$, and the weak formulation of the homogeneous Dirichlet Problem for the $p$-Bilaplacian is solvable in $W^{2,p}$ whenever $p>2$. Likewise, the $\infty$-Bilaplacian is the Euler-Lagrange equation for the $L^\infty$ variational problem of minimizing $\|\Delta u\|_\infty$, and minimizers are sought in the space $W^{2,\infty}$. The existence of minimizers for the $p$-Bilaplacian (weak solutions of $\Delta_p^2$) with $2\leq p<\infty$ follows relatively easily from the coercivity of the related semilinear form:
$$(u,v) \to \int_\Omega (\abs{\Delta u}^{p-2}\Delta u )\Delta v.$$
Further information on the $p$-Bilaplacian may be found in \cite{KP}.
The case $p=2$ is the linear Bilaplacian equation $\Delta^2 u=0$. Among the usual choices for boundary values (Dirichlet, Neumann, Mixed), the Bilaplacian admits existence and uniqueness results for its weak formulation with
\textbf{Navier Boundary Conditions}:
$$u=g, \quad \Delta u= 0 \textrm{ on } \partial \Omega$$
Indeed, with these boundary conditions, the Bilaplacian $\Delta^2$ can be appropriately decomposed into two iterations of Poisson's Equation, and the natural function space to search for solutions becomes $W^{2,2}$. A discussion about the Bilaplacian along these lines is presented in \cite{S}. Navier Boundary Conditions are the best choice for the equations we study, since we would also like to decompose higher order problems into second order ones. In particular, we shall analyze the Navier Boundary Problem for the $\infty$-Bilaplacian and perturbations of the same, and we obtain the following theorems:
\begin{theorem}
Let $\epsilon>0$ be arbitrary. Let $B_R$ be the ball of radius $R$ around the origin of $\mathbb{R}^n$. Let $f\geq 0$ be in $C^{0,\alpha}(B_R)\cap C(\overline{B_R})$ and $g$ be a continuous function on $\overline{B_R}$. We assume in addition that
$$f(x)= O\big((R^2-\abs{x}^2)^3\abs{x}^2\big).$$
Then the boundary value problem
$$
\begin{cases}
\epsilon \Delta v+(\Delta v)^3 \abs{D(\Delta v)}^2=f(x)\\
v=g \textrm{ on } \partial B_R\\
\Delta v = 0 \textrm{ on } \partial B_R\\
\end{cases}
$$
has a viscosity solution $v_\epsilon$ in $C^{2,\beta}(B_R)\cap C(\overline{B_R})$, where $0<\beta \leq \alpha$.
\end{theorem}
The solutions to the perturbed problem will help us construct a solution to the unperturbed equation:
\begin{theorem} Let $f,g$ satisfy the hypotheses of the previous theorem.
The boundary value problem
$$
\begin{cases}
(\Delta v)^3 \abs{D(\Delta v)}^2 = f(x)\\
v=g \textrm{ on } \partial B_R\\
\Delta v = 0 \textrm{ on } \partial B_R\\
\end{cases}
$$
has a viscosity solution $v \in C^{2,\beta}(B_R)\cap C(\overline{B_R})$, where $0<\beta \leq \alpha$.
\end{theorem}

The $\infty$-Bilaplacian equation does not admit classical $C^3$ solutions for the Dirichlet Problem unless we are in the homogeneous case with very simple boundary values (e.g. quadratic functions when in $\mathbb{R}$) \cite{KP2}. 
On the other hand, our theorem shows that the $\infty$-Bilaplacian equation admits a viscosity solution to the Navier Problem with $C^{2,\alpha}$ regularity, which is as regular as we might expect. The solution we find is also in the space $W^{2,\infty}$, which is the space desired for the associated variational problem. In this way, we improve on the existing theory detailed, for example, in \cite{KP}, \cite{KP2} by dealing with the inhomogeneous case and by the generality of our boundary values for $v$. For example, the authors of \cite{KP2} have shown existence for the Dirichlet Problem with boundary values in $W^{2,\infty}$, while we deal with continuous boundary values. We also remark that the homogeneous Navier Problem for the $\infty$-Bilaplacian admits very simple solutions. Indeed, we have classical solutions that coincide in this case with harmonic functions with appropriate boundary values. In particular:

\begin{theorem}
Let $\Omega$ be a $C^2$ bounded domain, and let $g$ be a continuous function on $\overline{\Omega}$. Then the Navier boundary value problem:
$$
\begin{cases}
(\Delta v)^3 \abs{D(\Delta v)}^2 = 0\\
v=g \textrm{ on } \partial \Omega\\
\Delta v = 0 \textrm{ on } \partial \Omega\\
\end{cases}
$$
admits a unique classical solution $v$ that is precisely the solution to the Dirichlet problem $\Delta v=0$ in $\Omega$, $v=g$ on $\partial \Omega$. 
\end{theorem}

Lastly, we also prove an existence theorem for an eigenvalue problem associated to the $\infty$-Bilaplacian:
\begin{theorem}
Let $n\geq 3$. For any $\lambda<0$ and for any continuous and bounded function $f$ with support on some open annulus centered at the origin, there is a viscosity solution $u$ in  $C^{1,\alpha}(\mathbb{R}^n)$ for all $\alpha>0$ to:
$$\lambda u +(\Delta u)^3\abs{D(\Delta u)}^2=f(x).$$
\end{theorem}
More generally, the introduction of a lower order term requires us to solve an integro-differential equation using the theory of viscosity solutions.

The generality of existence theorems in the theory of viscosity solutions allows us to obtain results that apply to a much wider class of differential equations, but we omit this discussion here. Instead, we focus on concrete examples with emphasis on the $\infty$-Bilaplacian equation.

In Section 2, we introduce the use of viscosity solutions for a simple higher order example: a nonlinear third-order ordinary differential equation. In Section 3, we analyze the $\infty$-Bilaplacian equation, and in Section 4 we discuss the addition of a zeroth-order term. Our last section includes an application to an eigenvalue problem associated to the the $\infty$-Bilaplacian.

We would like to thank our advisor Camillo De Lellis for very helpful conversations about the theory of viscosity solutions and about this paper. We also thank Ravi Shankar for his insightful comments and his encouragement to improve and publish this work.
\section{A First Example}
Before discussing our results for partial differential equations, we shall find it instructive to examine a simpler model: a fully nonlinear third-order ordinary differential equations. More precisely, we consider the following boundary value problem:\\
\begin{equation}
F(x, u_{xxx}) =0 \textrm{ on } [-\pi, \pi]
\end{equation}
\begin{equation}
u(-\pi)=u(\pi), \quad u_x(-\pi)=u_x(\pi)
\end{equation}

We now make the (formal) differential transformation $u_x=w$ to work with a second-order differential equation instead. More precisely, if $w(x)$ is a classical $C^2$ solution of 
\begin{equation}
F(x, w_{xx}) =0 \textrm{ on } [-\pi, \pi]
\end{equation}
\begin{equation}
w(-\pi) = w(\pi)
\end{equation}
we define 
\begin{equation}
u(x) := u(-\pi) +\int_{-\pi}^x w(s) ds 
\end{equation}
which shall be a classical $C^3$ solution of $(1), (2)$ if, in addition to solving $(3), (4)$, $w$ also has zero mean:
\begin{equation}
\int_{-\pi}^{\pi} w(s) ds =0.
\end{equation}
The last condition $(6)$ is satisfied, for example, if the function $w(x)$ is odd.

We have the following definition:
\begin{definition} The function $u(x)$ is a $C^1$ {\textbf{viscosity}} solution to
$$F(x, u_{xxx}) =0 \textrm{ on } (-\pi, \pi), \quad u(-\pi)=u(\pi), \quad u_x(-\pi)=u_x(\pi)$$
if and only if
$$u(x) = u(-\pi) +\int_{-\pi}^x w(s) ds$$
where $w(x)$ is a continuous viscosity solution to  
$$F(x, w_{xx}) =0 \textrm{ on } (-\pi, \pi), \quad w(-\pi)=w(\pi).$$
\end{definition}

In what follows, we determine sufficient conditions for the functional $F$ so that the boundary value problem $(1), (2)$ has a unique $C^1$ viscosity solution. Namely:
\begin{theorem}
Let $F(x,s):[-\pi,\pi]\times\mathbb{R}\to \mathbb{R}$ be a continuous function that is strictly monotonic in the second variable, in the sense that there exists $\lambda>0$ so that:
$$F(x,s+t)\geq F(x,s) +\lambda t.$$
 Assume in addition that we have the following symmetry condition:
$$F(-x,-s)=-F(x,s), \textrm{ for all permissable } (x,s).$$
It follows that the boundary value problem
$$-F(x, u_{xxx}) =0, \quad u(-\pi)=u(\pi), \quad u_x(-\pi)=u_x(\pi)$$
has a unique $C^1$ viscosity solution. 
\end{theorem}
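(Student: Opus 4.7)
My plan is to reduce the third-order problem to a second-order one by setting $w = u_x$, apply Theorem 1.10 to obtain $w$ with zero boundary values, and exploit the symmetry $F(-x,-s)=-F(x,s)$ together with uniqueness to force $w$ to be odd; the zero-mean property of an odd $w$ then gives exactly the compatibility $\int_{-\pi}^{\pi} w = 0$ needed for the integral $u(x) = u(-\pi) + \int_{-\pi}^x w$ to satisfy the periodic boundary conditions.

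For existence, I would first apply Theorem 1.10 on $\Omega = (-\pi,\pi)$ with boundary data $h \equiv 0$ to obtain a unique continuous viscosity solution $w$ of
$$-F(x,w_{xx}) = 0 \text{ in } (-\pi,\pi), \quad w(\pm\pi) = 0.$$
The sub/supersolution hypothesis of Theorem 1.10 can be matched to $h=0$ after shifting a given pair by a constant (which preserves $D^2$ and hence the equation). I would then set $\tilde w(x) := -w(-x)$ and claim it solves the same BVP. To verify the subsolution condition, suppose $\tilde w - \varphi$ has a local maximum at $x_0$; a short computation shows that $w - \psi$ has a local \emph{minimum} at $y_0 := -x_0$, where $\psi(y) := -\varphi(-y)$. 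Since $w$ is a supersolution, $-F(y_0, \psi''(y_0)) \geq 0$; combined with $\psi''(y_0) = -\varphi''(x_0)$ and the symmetry hypothesis $F(-x_0,-\varphi''(x_0)) = -F(x_0,\varphi''(x_0))$, this yields $-F(x_0, \varphi''(x_0)) \leq 0$, which is the subsolution condition for $\tilde w$. The supersolution condition follows by the same argument with roles swapped. Since $\tilde w(\pm\pi) = 0$, the uniqueness clause of Theorem 1.10 forces $\tilde w = w$, so $w$ is odd and $\int_{-\pi}^{\pi} w = 0$. Setting $u(x) := u(-\pi) + \int_{-\pi}^x w(s)\,ds$ then produces the desired $C^1$ viscosity solution.

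For uniqueness, suppose $u^1, u^2$ are two $C^1$ viscosity solutions with $w^i := u^i_x$ and $w^i(\pm\pi) = c^i$; then $w^i - c^i$ has the same second derivative as $w^i$ and vanishes on the boundary, so by the uniqueness in Theorem 1.10, $w^i - c^i = w$. The periodic BC $u^i(\pi) = u^i(-\pi)$ forces $\int_{-\pi}^{\pi} w^i = 2\pi c^i = 0$, so $c^i = 0$ and $w^1 = w^2 = w$; thus $u^1$ and $u^2$ can differ only by an additive constant, which is the natural gauge freedom of the problem (the equation and BCs are invariant under $u \mapsto u + \text{const}$).

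The main obstacle will be the reflection/symmetry argument showing that $\tilde w(x) = -w(-x)$ is a viscosity solution of the \emph{same} equation: the reflection together with the sign change swaps subsolution and supersolution tests, and one must verify carefully that the symmetry hypothesis on $F$ precisely compensates for this swap (so that the subsolution condition for $\tilde w$ emerges out of the supersolution condition for $w$, with signs restored by $F(-x,-s) = -F(x,s)$). Once this step is in hand, the rest is a clean application of the comparison/uniqueness results in Theorem 1.10 and Corollary 1.12.
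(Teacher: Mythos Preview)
Your proposal is correct and follows essentially the same route as the paper: reduce to the second-order problem for $w=u_x$, use the reflection $\tilde w(x)=-w(-x)$ together with the symmetry $F(-x,-s)=-F(x,s)$ to show $\tilde w$ is again a viscosity solution of the same problem, and then invoke the uniqueness from Theorem~1.10 to conclude $w$ is odd, hence has zero mean. Your treatment is slightly more explicit than the paper's in two places---you fix the boundary data $h\equiv 0$ (the paper writes only the periodic condition $w(-\pi)=w(\pi)$, which together with oddness forces the same thing) and you spell out the uniqueness-up-to-additive-constant for $u$---but the core reflection/symmetry argument is identical.
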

\begin{proof}
By the assumptions on the functional $F(x,s)$, we know that the problem:
$$-F(x, w_{xx}) =0, \quad w(-\pi) = w(\pi)$$
has a unique continuous viscosity solution (cf. Theorem 5.25 in \cite{ACM}). We now show this solution must also be an odd function on $[-\pi, \pi]$. Indeed, let $\hat{w}(x)=-w(-x)$. Our goal is to show that $\hat{w} = w$ by showing that $\hat{w}$ is also a continuous viscosity solution. 

Suppose that $\hat{w}-\phi$ has a local maximum at $x$, which implies that $-w(-x)-\phi(x)$ has a local maximum at $x$. Then $w(-x)+\phi(x)$ has a local minimum at $x$. Let $\hat{\phi}(x)=-\phi(-x)$. Then we see that $w(-x)-\hat{\phi}(-x)$ has a local minimum at $x$, or equivalently, $w(x)-\hat{\phi}(x)$ has a local minimum at $-x$. It follows from the supersolution criterion that $-F(-x, \hat{\phi}_{xx})\geq 0$, which implies $-F(-x,-\phi_{xx})=F(x,\phi_{xx})\geq 0$, which in turn gets $-F(x,\phi_{xx})\leq 0$. Since $\phi$ was arbitrary, $\hat{w}$ is a subsolution of the same equation. By the same argument, since $w$ is a subsolution, $\hat{w}$ is a supersolution. Lastly, by our assumptions on the functional $F$, continuous viscosity solutions of our equation are unique, which means that $\hat{w}=w$, i.e. $w$ is odd.

Since $w$ is odd, we have that
$$\int_{-\pi}^{\pi} w(s)ds=0$$
and the function
$$u(x) = u(-\pi) +\int_{-\pi}^x w(s)ds$$ is our unique $C^1$ viscosity solution to the boundary value problem.
\end{proof}
As noticed before, we really only need that $w$ has zero mean, so the hypotheses of the theorem could probably be weakened while retaining an existence result for $C^1$ viscosity solutions to the problem.
We finish the section with two typical theorems relating viscosity and classical solutions.
\begin{theorem}
Let $F(x,s)$ be as in the statement of Theorem 2.1. If the boundary value problem 
$$F(x, u_{xxx}) =0, \quad u(-\pi)=u(\pi), \quad u_x(-\pi)=u_x(\pi)$$
has a classical $C^3$ solution, then it is the unique $C^1$ viscosity solution.
\end{theorem}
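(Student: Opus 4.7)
The plan is to pass to the derivative $w := u_x$ and reduce the third-order problem to the second-order viscosity setting already handled by Theorem 2.1.

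First, I would set $w := u_x \in C^2([-\pi,\pi])$, so that pointwise $F(x, w_{xx}) = F(x, u_{xxx}) = 0$ and $w$ is a classical solution of the reduced second-order equation. The boundary data transfer transparently: $u_x(-\pi) = u_x(\pi)$ immediately gives the periodicity $w(-\pi) = w(\pi)$, while the fundamental theorem of calculus together with $u(-\pi) = u(\pi)$ yields the zero-mean compatibility
$$\int_{-\pi}^{\pi} w(s)\, ds = u(\pi) - u(-\pi) = 0.$$

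Next, I would promote $w$ from a classical to a continuous viscosity solution of $-F(x, w_{xx}) = 0$. Because $F$ satisfies the hypotheses of Theorem 1.10, in particular the strict monotonicity $F(x, s+t) \geq F(x, s) + \lambda t$ for some $\lambda > 0$, the functional $E(x, r, p, X) := -F(x, X)$ is strictly decreasing in $X$ under the identification $\mathrm{Sym}^{1 \times 1} \cong \mathbb{R}$. Theorem 1.8 then upgrades the classical identity $-F(x, w_{xx}) = 0$ to simultaneous viscosity sub- and supersolution statements, so $w$ is a continuous viscosity solution of the reduced equation with matching endpoint values.

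I would then invoke the uniqueness already packaged inside Theorem 2.1: the periodic problem $-F(x, w_{xx}) = 0$, $w(-\pi) = w(\pi)$, admits a unique continuous viscosity solution, extracted from Theorem 1.10 in that proof. Our $w$ must therefore coincide with that unique solution. Applying the definition of $C^1$ viscosity solution from the beginning of the section, the function
$$\tilde{u}(x) := u(-\pi) + \int_{-\pi}^x w(s)\, ds$$
is the unique $C^1$ viscosity solution of the third-order problem with the prescribed left-endpoint value $u(-\pi)$, and the fundamental theorem of calculus identifies $\tilde{u}$ with $u$ itself.

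The argument is essentially bookkeeping once the transformation $u \mapsto u_x$ is made; each step is a routine application of results from Section 1 together with Theorem 2.1. The only point requiring genuine care is the verification that the third-order boundary data $u(-\pi) = u(\pi)$ and $u_x(-\pi) = u_x(\pi)$ are exactly what is needed to supply both the periodicity $w(-\pi) = w(\pi)$ and the zero-mean condition $\int_{-\pi}^\pi w = 0$ demanded by the definition of $C^1$ viscosity solution, so that the integrated formula delivers a bona fide viscosity solution rather than only a candidate.
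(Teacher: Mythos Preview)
Your proof is correct and follows essentially the same route as the paper: set $w=u_x$, observe it is a $C^2$ classical solution of $-F(x,w_{xx})=0$, invoke Theorem~1.8 via the monotonicity of $F$ to upgrade to a viscosity solution, and recover $u$ as $u(-\pi)+\int_{-\pi}^x w$. The paper's version is terser and leaves the boundary/zero-mean bookkeeping implicit, whereas you spell out explicitly how $u(-\pi)=u(\pi)$ and $u_x(-\pi)=u_x(\pi)$ deliver both $w(-\pi)=w(\pi)$ and $\int_{-\pi}^\pi w=0$; this extra care is appropriate but not a different argument.
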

\begin{proof}
Let $u$ be a $C^3$ classical solution of the problem. Then, $w(x) =u_x(x)$ is a $C^2$ function solving 
$$-F(x, w_{xx}) =0$$
in the classical sense. Since $F(x,s)$ is continuous and monotonically increasing in $s$, it follows from Theorem 5.11 in \cite{ACM} that $w$ is a viscosity solution. This in turn implies that $u$ is a viscosity solution since, by construction, we may write $u$ as
$$u(x) =u(-\pi) +\int_{-\pi}^x w(s) ds.$$
\end{proof}
\begin{theorem}
If a viscosity solution of
$$F(x, u_{xxx}) =0, \quad u(-\pi)=u(\pi), \quad u_x(-\pi)=u_x(\pi)$$
 is in $C^3$, then it is a classical solution.
\end{theorem}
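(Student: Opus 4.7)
The plan is to reduce everything to the second-order problem via the differential transformation $w = u_x$, and then invoke Theorem 1.7 which says that a $C^2$ viscosity solution of a continuous equation is automatically a classical solution.

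First, I would unpack the definition of $C^1$ viscosity solution: by Definition 2.1, since $u$ is a $C^1$ viscosity solution, there exists a continuous viscosity solution $w$ of $F(x, w_{xx}) = 0$ with $w(-\pi) = w(\pi)$ such that
\[ u(x) = u(-\pi) + \int_{-\pi}^{x} w(s)\, ds. \]
In particular, $w = u_x$. Our hypothesis $u \in C^3$ then upgrades this to $w \in C^2([-\pi,\pi])$, with $w_{xx} = u_{xxx}$.

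Now $w$ is a $C^2$ function that is also a viscosity solution of $F(x, w_{xx}) = 0$, and $F(x,s)$ is continuous by assumption (it inherits continuity from the hypotheses of Theorem 1.10 carried over from Theorem 2.1). Applying Theorem 1.7 to $w$ gives that $w$ is a classical solution, i.e.
\[ F\bigl(x, w_{xx}(x)\bigr) = 0 \quad \text{for every } x \in (-\pi, \pi). \]
Substituting $w_{xx} = u_{xxx}$ yields $F(x, u_{xxx}(x)) = 0$ pointwise on $(-\pi,\pi)$. The boundary conditions $u(-\pi) = u(\pi)$ and $u_x(-\pi) = w(-\pi) = w(\pi) = u_x(\pi)$ hold automatically: the first by construction of $u$ from $w$ via integration (pick any $u(-\pi)$ and note the mean of $w$ must be zero because $u$ extends periodically — this is ensured by the setup giving a genuine $C^1$ viscosity solution), and the second by the periodicity $w(-\pi) = w(\pi)$ built into the definition.

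There is really no obstacle here; the only point requiring care is simply chasing the definition so that Theorem 1.7 is applicable to the regularity-lifted $w$. Once one recognizes that the $C^3$ regularity of $u$ is exactly what is needed to make $w = u_x$ a $C^2$ candidate for Theorem 1.7, the rest is mechanical.
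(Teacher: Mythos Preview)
Your proof is correct and follows essentially the same route as the paper: unpack the definition to get $w=u_x$ as a continuous viscosity solution of the second-order equation, use $u\in C^3$ to upgrade $w$ to $C^2$, and then invoke Theorem~1.7 to conclude that $w$ (hence $u$) is classical. The paper's argument is identical in content, only terser; your extra paragraph on the boundary conditions is unnecessary since they are already built into the definition of the viscosity solution $u$, but it does no harm.
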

\begin{proof}
Let $u$ be a viscosity solution. Then there is a continuous function $w$ that is a viscosity solution of 
$$-F(x, w_{xx})=0$$
such that
$$u(x) =u(-\pi) +\int_{-\pi}^x w(s) ds.$$
Since $u$ is in $C^3$, it follows that $w$ is in $C^2$. Thus, by Theorem 5.10 in \cite{ACM}, $w$ is also a classical solution of the problem, which immediately implies the same for $u$.
\end{proof}
\section{The $\infty$-Bilaplacian Equation}
We study the Navier problem for the inhomogeneous $\infty$-Bilaplacian equation:
\begin{equation}
\Delta_\infty^2 u = (\Delta u)^3 \abs{D(\Delta u)}^2=f(x)
\end{equation}
As discussed earlier, the $\infty$-Bilaplacian is the formal limit of the $p$-Bilaplacian equation as $p\to \infty$. The latter equation is a nonlinear generalization of the Bilaplacian (Biharmonic) equation $\Delta^2 u=0$. Both equations have many applications, and we refer the reader to \cite{LM} and \cite{S} for a more complete discussion of their use. Also, the $\infty$-Bilaplacian is the Euler-Lagrange equation for minimizing $\| \Delta u\|_\infty$, which is the prototypical $L^\infty$ variational problem and has been studied extensively in \cite{KP} and \cite{KP2}, among others.

Typically, a Dirichlet problem for $\Delta_\infty^2$ admits neither a classical $C^3$ solution nor a weak $C^2$ solution, even for domains in $\mathbb{R}$. However, in what follows we show that a $C^{2,\alpha}$ solution to the Navier Problem may be found that is constructed from (and bounded by) a sequence of $C^{2,\alpha}$ viscosity solutions to a perturbation of $\Delta_\infty^2$.

\begin{theorem}
Let $\epsilon>0$ be arbitrary. Let $B_R$ be the ball of radius $R$ around the origin of $\mathbb{R}^n$. Let $f\geq 0$ be in $C^{0,\alpha}(B_R)\cap C(\overline{B_R})$ and $g$ be a continuous function on $\overline{B_R}$. We assume in addition that
$$f(x)= O\big((R^2-\abs{x}^2)^3\abs{x}^2\big).$$
Then the boundary value problem
$$
\begin{cases}
\epsilon \Delta v+(\Delta v)^3 \abs{D(\Delta v)}^2=f(x)\\
v=g \textrm{ on } \partial B_R\\
\Delta v = 0 \textrm{ on } \partial B_R\\
\end{cases}
$$
has a viscosity solution $v_\epsilon$ in $C^{2,\beta}(B_R)\cap C(\overline{B_R})$ where $0<\beta \leq \alpha$.
\end{theorem}
\begin{proof}
The boundary value problem
$$
\begin{cases}
\epsilon w+w^3 \abs{Dw}^2=f(x)\\
w= 0 \textrm{ on } \partial B_R\\
\end{cases}
$$
can be seen to admit a continuous viscosity solution by Perron's Method. Indeed since $f\geq 0$, $w_0=0$ is an obvious subsolution to the problem. it remains to construct a supersolution $w_1$ such that $w_1 =0$ on $\partial B_R$. By assumption, there exists a constant $C$ such that
$$0\leq f(x) \leq C\big((R^2-\abs{x}^2)^3\abs{x}^2\big)$$
for all $x\in \overline{B_R}$. Let $M$ be such that:
$$4M^5 >C.$$
The function $w_1(x) = M(R^2-\abs{x}^2)$ is now our desired supersolution. Obviously, $w_1=0$ on the boundary of the sphere, $\partial B_R$. It remains to show that it satisfies the supersolution condition. Let $\varphi \in C^\infty_0$ be arbitrary such that $w_1 - \varphi$ has a local minimum at $x\in \Omega$. Without loss of generality, the value of the local minimum is zero, and because $w_1$ is smooth as well, we necessarily have $Dw_1(x)=D\varphi(x)$. Thus, it suffices to show that
$$w_1^3(x)\abs{Dw_1(x)}^2\geq C\big((R^2-\abs{x}^2)^3\abs{x}^2\big)\geq  f(x)$$
since $w_1\geq 0$ and
$$\epsilon w_1+w_1^3(x)\abs{Dw_1(x)}^2 \geq w_1^3(x)\abs{Dw_1(x)}^2.$$
To that end, we calculate:
$$\abs{Dw_1(x)}^2 = \abs{2M x}^2 = 4M^2 \abs{x}^2$$
and
$$w_1^3(x) = M^3(R^2-\abs{x}^2)^3.$$
Thus,
$$w_1^3(x)\abs{Dw_1(x)}^2 \geq 4M^5\abs{x}^2(R^2-\abs{x}^2)^3\geq C\big((R^2-\abs{x}^2)^3\abs{x}^2\big)\geq  f(x).$$
Now Perron's method for viscosity solutions guarantees the existence of a continuous solution $w(x)$. The H{\"o}lder regularity and uniqueness follows from Theorem 2.6 in \cite{JK}. Applying this theorem requires a monotonicity condition satisfied because we have introduced the perturbation term. Additionally, we have that in $\overline{B_R}$: 
$$0\leq w(x) \leq M(R^2-\abs{x}^2).$$
Then, since $w_\epsilon$ is H{\"o}lder continuous (say of exponent $0<\beta\leq \alpha$), we can solve the Dirichlet problem:
$$
\begin{cases}
\Delta v =w_\epsilon\\
v=g \textrm{ on } \partial B_R
\end{cases}
$$
and get a viscosity solution $v_\epsilon\in C^{2,\beta}(B_R)\cap C(\overline{B_R})$ to the problem.
\end{proof}
\begin{theorem} Let $f,g$ satisfy the hypotheses of Theorem 3.1. Then the boundary value problem
$$
\begin{cases}
(\Delta v)^3 \abs{D(\Delta v)}^2 = f(x)\\
v=g \textrm{ on } \partial B_R)\\
\Delta v = 0 \textrm{ on } \partial B_R\\
\end{cases}
$$
admits a viscosity solution $v\in C^{2,\beta}(B_R)\cap C(\overline{B_R})$, where $0<\beta \leq \alpha$.
\end{theorem}
\begin{proof}
In the proof of the previous theorem, the viscosity solutions $w_\epsilon$ that solve
$$\epsilon w +w^3 \abs{Dw}^2= f(x)$$
have $\beta$-H{\"o}lder norm that depends only on the $\alpha$-H{\"o}lder norm of $f(x)$. By the Arzela-Ascoli theorem, there is a subsequence of $w_\epsilon$ converging uniformly to a function $w\in C^{0,\beta}(B_R)$ that is a viscosity solution of the unperturbed equation
$$w^3\abs{Dw}^2=f(x).$$
Finally, solving the corresponding Poisson equation yields our desired solution $v\in C^{2,\alpha}(B_R)\cap C(\overline{B_R})$.
\end{proof}
Finally, we deal with the homogeneous Navier boundary value problem:
\begin{theorem}
Let $\Omega$ be a $C^2$ bounded domain, and let $g$ be a continuous function on $\overline{\Omega}$. Then the Navier boundary value problem:
$$
\begin{cases}
(\Delta v)^3 \abs{D(\Delta v)}^2 = 0\\
v=g \textrm{ on } \partial \Omega\\
\Delta v = 0 \textrm{ on } \partial \Omega\\
\end{cases}
$$
admits a unique classical solution $v$ that is precisely the solution to the Dirichlet problem $\Delta v=0$ in $\Omega$, $v=g$ on $\partial \Omega$. 
\end{theorem}
\begin{proof}
The problem
$$
\begin{cases}
w^3 \abs{Dw}^2=0\\
w=0 \textrm{ on } \partial \Omega\\
\end{cases}
$$
is easily seen to admit $w=0$ as its unique solution. Finding a solution to the original problem is now tantamount to solving the standard Dirichlet problem $\Delta v =0$ in $\Omega$, $v=g$ on $\partial \Omega$.  The solution to this is the harmonic function $v$ with boundary values $g$, and $v$ is in particular an absolute minimizer of $\| \Delta u\|_\infty$, which can be forthrightly seen.
\end{proof}
Theorems 3.1 and 3.2 can probably be extended to sufficiently smooth bounded and star-shaped domains while maintaining the same supersolution construction; however, the inhomogeneity $f$ should have a similar behavior at the the boundary of the domain.
\section{General Equations with a Zeroth-Order Term}
In this section we deal with the whole space $\mathbb{R}^n$. We will prove a general Perron-type existence theorem for the following general equation:
\begin{equation}
F(x,u,\Delta u, D(\Delta u), D^2(\Delta u))=0
\end{equation}
which we will consider as a system of coupled equations:
\begin{equation}
\label{ZeroEqn}
\begin{gathered}
F(x, \Delta^{-1}w, w, Dw, D^2w)=0.\\
\Delta u = w
\end{gathered}
\end{equation}
The functional $F$ will always be assumed continuous, degenerate elliptic:
$$F(\cdot,\cdot,\cdot,\cdot, X)\geq F(\cdot,\cdot,\cdot,\cdot, Y) \textrm{ whenever } X-Y \textrm{ is negative definite,}$$
and nonincreasing in the $\Delta^{-1}w$ variable. 
We have denoted by $\Delta^{-1}$ the operator of convolution with $\Psi_n$:
$$\Delta^{-1} w := w\ast \Psi_n$$
where 
\begin{equation}
\begin{gathered}
\Psi_n(z):= K_n \abs{z}^{2-n}, \quad \textrm{ if } n>2\\
\Psi_2(z):= K_2 \log{\abs{z}}
\end{gathered}
\end{equation}
Following the terminology in \cite{GT}, we may occasionally refer to $\Delta^{-1}w$ as the Newtonian potential of $w$. 
Note that we now have an integro-differential equation and the usual Poisson equation. We will find a viscosity solution to the former equation and then use convolution with $\Psi_n$ to solve the Poisson equation.
Following \cite{AT}, we say that $u$ is a viscosity subsolution of 
$$F(x,\Delta^{-1}w, w,Dw,D^2w)=0$$
if and only if for all smooth test functions $\phi$ such that $u^*-\phi$ has a local maximum at $x$ and $u^*-\phi <0$ in $\mathbb{R}^n-\{x\}$ we have
$$F_*(x,\Delta^{-1}\phi,u^*,D\phi,D^2\phi)\leq 0.$$
Similarly, we may define a viscosity supersolution to the integro-differential problem.
Before we begin to show a Perron-type theorem for Equation (\ref{ZeroEqn}), we recall a basic lemma about $\Delta^{-1}$. 
\begin{lemma}
Suppose $\phi\in C_c^\infty(\mathbb{R}^n)$. Then $\Delta^{-1}\phi \in C^\infty(\mathbb{R}^n)$. In particular, we have the pointwise continuity of $\Delta^{-1}\phi$.
\end{lemma}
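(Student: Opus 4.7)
The plan is to establish smoothness of $\Delta^{-1}\phi = \phi \ast \Psi_n$ on any ball $B_R$ by the standard technique of differentiating under the integral sign, after first pinning down enough decay of $\phi$ for the convolution to converge. Since $\Psi_n$ grows (logarithmically for $n=2$) or fails to decay strongly enough for the convolution to be automatic for arbitrary $C^\infty$ functions, I would first clarify the standing assumption that $\phi$ has enough decay—for example $\phi \in C^\infty_c(\mathbb{R}^n)$ or Schwartz. This is consistent with the viscosity framework inherited from \cite{AT}: in the definition of subsolution involving a strict local maximum of $u^* - \phi$ at $x$ with $u^* - \phi < 0$ elsewhere, one may, after localizing near $x$ and modifying $\phi$ outside a neighborhood without changing the behavior at $x$, assume $\phi$ has compact support.

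Given this, the first step is to check the integrand $y \mapsto \phi(x-y)\Psi_n(y)$ lies in $L^1(\mathbb{R}^n)$ for each $x \in B_R$: near $y=0$, the local integrability of $|y|^{2-n}$ (or $\log|y|$ when $n=2$) controls the singularity of $\Psi_n$ since $2-n > -n$, while compact support or Schwartz decay of $\phi$ controls the tail. The second step is to justify
$$D^\alpha(\phi \ast \Psi_n)(x) \;=\; \bigl((D^\alpha \phi) \ast \Psi_n\bigr)(x)$$
for every multi-index $\alpha$ and every $x \in B_R$. This is a routine dominated-convergence argument: for $x$ in a fixed compact subset of $B_R$, the difference quotients of $\phi$ are uniformly bounded in terms of $\|D^{\alpha+e_i}\phi\|_\infty$ on a fixed compact set, giving an integrable majorant against $|\Psi_n|$. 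Iterating yields derivatives of all orders, and continuity of each derivative follows from a second dominated-convergence argument using continuity of translation in $\phi$.

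The main (and really only) obstacle is not any nontrivial analysis but the clean statement of hypotheses: I would make explicit what class of $\phi$ is intended and briefly note how this is consistent with the viscosity-test-function convention of \cite{AT}, where test functions may be taken with compact support after standard localization. Once that is dispatched, the lemma reduces to the textbook fact that convolution of a Schwartz-class or compactly supported smooth function with the locally integrable Newtonian potential is smooth; pointwise continuity of $\Delta^{-1}\phi$ is then an immediate corollary, as needed in the applications below.
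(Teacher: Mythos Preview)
Your proof outline is correct, and your observation about the hypotheses is well taken: as stated, $\phi\in C^\infty(\mathbb{R}^n)$ alone does not guarantee that $\phi\ast\Psi_n$ converges, so some decay assumption (compact support, Schwartz class, or similar) is implicitly needed, and you are right that this is harmless in the viscosity setting of \cite{AT} where test functions can be localized.

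There is nothing to compare your argument against, however: the paper does not prove this lemma. It is introduced with the phrase ``we recall a basic lemma about $\Delta^{-1}$'' and stated without proof, treated as a standard fact about the Newtonian potential. Your write-up therefore supplies details the paper omits, and does so along the expected textbook lines (local integrability of $\Psi_n$ near the origin plus differentiation under the integral via dominated convergence). If anything, you have been more careful than the paper about the standing hypotheses on $\phi$.
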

Now we proceed with the usual program towards a Perron-type existence theorem. 
\begin{theorem}
Let $\mathcal{F}$ be a nonempty family of viscosity subsolutions to 
$$F(x,\Delta^{-1}w, w,Dw,D^2w)=0$$
and let
$$v(x):=\sup\{w(x) : w\in \mathcal{F}\}.$$
Then, $v$ is a viscosity subsolution of the same problem.
\end{theorem}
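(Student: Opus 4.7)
The plan is to follow the classical Perron-style argument for upper envelopes of subsolutions, adapting it to handle the nonlocal operator $\Delta^{-1}$. First I would observe that $v^*$ is automatically upper semicontinuous by the construction of the usc envelope, so the first bullet of the subsolution definition is free. Let $\phi \in C^\infty(\mathbb{R}^n)$ be a test function in the AT-style sense: $v^* - \phi$ has a local maximum at some $x_0$ and $v^*(x) - \phi(x) < v^*(x_0) - \phi(x_0)$ for every $x \neq x_0$. After a trivial normalization we may assume the common value is $0$, so $v^* \leq \phi$ with equality only at $x_0$. The goal is to show
$$F_*\bigl(x_0,(\Delta^{-1}\phi)(x_0),v^*(x_0),D\phi(x_0),D^2\phi(x_0)\bigr)\leq 0.$$

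Next, I would produce the usual approximating sequence. By definition of $v$ and $v^*$, there exist $y_j \to x_0$ and $w_j \in \mathcal{F}$ with $w_j(y_j)\to v^*(x_0)$. On a small closed ball $\overline{B_r(x_0)}$, the usc function $w_j^* - \phi$ attains its maximum at some $z_j$. Combining $w_j^*(z_j)-\phi(z_j)\geq w_j(y_j)-\phi(y_j)\to 0$ with the strict max property of $v^*-\phi$ (which makes $v^*-\phi$ strictly negative on $\overline{B_r(x_0)}\setminus B_\delta(x_0)$ for every $\delta>0$) forces $z_j \to x_0$ and $w_j^*(z_j) \to v^*(x_0)$. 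To use $\phi$ legitimately as a test function for $w_j^*$ at $z_j$ one must promote the local max to a strict \emph{global} max, which I would do by perturbing $\phi$ to $\phi_j = \phi + \gamma_j \lvert x - z_j\rvert^2$ near $z_j$ and adding a coercive bump supported far outside a large ball, so that $w_j^* - \phi_j < 0$ off $z_j$ while $\gamma_j \to 0$ and the global perturbation has amplitude tending to zero.

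Third, apply the subsolution criterion for each $w_j$ at $z_j$ against $\phi_j$:
$$F_*\bigl(z_j,(\Delta^{-1}\phi_j)(z_j),w_j^*(z_j),D\phi_j(z_j),D^2\phi_j(z_j)\bigr)\leq 0.$$
Then pass to the limit. Lemma 5.1 guarantees pointwise continuity of $\Delta^{-1}$ applied to smooth functions, and one checks, using decay of $\Psi_n$, that $(\Delta^{-1}\phi_j)(z_j) \to (\Delta^{-1}\phi)(x_0)$ since the supports of the perturbations recede to infinity (or have vanishing mass). The smoothness of $\phi$ gives $D\phi_j(z_j)\to D\phi(x_0)$ and $D^2\phi_j(z_j)\to D^2\phi(x_0)$, while $w_j^*(z_j)\to v^*(x_0)$. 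Lower semicontinuity of $F_*$ in all its arguments then yields the desired inequality at $x_0$, completing the proof.

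The main obstacle is the nonlocal nature of the subsolution criterion. The required strict global max condition on $\phi$ interacts badly with any cutoff or perturbation, because modifying $\phi$ far from $x_0$ changes $\Delta^{-1}\phi$ everywhere. The delicate step is therefore the construction of the perturbations $\phi_j$: they must (i) enforce a strict global maximum of $w_j^*-\phi_j$ at $z_j$, (ii) preserve $D\phi_j(z_j), D^2\phi_j(z_j)$ up to $o(1)$, and (iii) ensure $(\Delta^{-1}\phi_j)(z_j) - (\Delta^{-1}\phi)(x_0) \to 0$, which is where the explicit decay of the Newtonian kernel $\Psi_n$ from (25) enters. Once the perturbation lemma is in place the remainder of the argument is a standard semicontinuity passage, essentially identical to the classical local Perron result.
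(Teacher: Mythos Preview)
Your argument is correct in spirit, and follows the standard Perron approximation scheme, but you take a substantially more laborious route than the paper at the one step you flag as the ``main obstacle''.

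The paper's proof uses the \emph{same} test function $\phi$ for each approximating subsolution $w_n$, with no perturbation at all. The reason this is legal is the observation you overlook: by the AT-style definition, the test function $\phi$ already satisfies $v^*-\phi<0$ on $\mathbb{R}^n\setminus\{x_0\}$ and $v^*(x_0)=\phi(x_0)$, so $v^*\leq\phi$ \emph{globally}. Since every $w\in\mathcal{F}$ satisfies $w\leq v$, one has $w_n^*\leq v^*\leq\phi$ on all of $\mathbb{R}^n$, with strict inequality away from $x_0$. Hence if $y_n$ denotes a maximum of $w_n^*-\phi$ on $\overline{B_r(x_0)}$, then $\phi$ is automatically an admissible test function for $w_n^*$ at $y_n$: the global condition $w_n^*-\phi<0$ on $\mathbb{R}^n\setminus\{y_n\}$ comes for free. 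One then applies the subsolution criterion directly, getting
\[
F_*\bigl(y_n,\Delta^{-1}\phi(y_n),w_n^*(y_n),D\phi(y_n),D^2\phi(y_n)\bigr)\leq 0,
\]
and passes to the limit using only Lemma~5.1 (continuity of $x\mapsto\Delta^{-1}\phi(x)$) together with lower semicontinuity of $F_*$.

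Your perturbation $\phi_j=\phi+\gamma_j|x-z_j|^2+\text{(far-away bump)}$ would also work, but it forces you to control $\Delta^{-1}\phi_j-\Delta^{-1}\phi$ for perturbations whose support runs off to infinity, invoking the decay of $\Psi_n$; none of this is needed once you notice that the supremum structure already provides the global ordering $w_n^*\leq\phi$. In short: the nonlocal obstacle you identify is real for generic stability results, but in the Perron setting it evaporates because the envelope $v^*$ majorizes every member of $\mathcal{F}$.
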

\begin{proof}
Suppose $v^*-\phi$ has strict local maximum of zero at $x$, and, without loss of generality and for $r$ small enough, assume that it is the unique local maximum in $B_r(x)$. Now we can find a sequence of points $x_n$ in $B_r(x)$ and a sequence of functions $w_n$ in $\mathcal{F}$ such that 
$$v^*(x)=\lim_{n\to \infty} v(x_n) =\lim_{n\to \infty} w_n(x_n).$$
Let $y_n$ be a maximum point of $w_n^*-\phi$ on $B_r$. Then we have
$$v^*(y_n)-\phi(y_n)\geq w_n^*(y_n)-\phi(y_n)\geq w_n^*(x_n)-\phi(x_n)\geq w_n(x_n)-\phi(x_n).$$
The right-hand side converges to zero as $n\to \infty$, so every limit $y$ of the $y_n$ satisfies
$$v^*(y)-\phi(y)\geq 0$$
from which we conclude that $y=x$. Then, from the subsolution condition for $v_n$:
$$F(y_n,\Delta^{-1}\phi(y_n),v_n^*(y_n),D\phi(y_n),D^2(y_n))\leq0,$$
the previous lemma about the continuity of $\Delta^{-1}$, and the continuity of $F$, we arrive at the conclusion of the theorem.
\end{proof}
\begin{theorem}
Classical solutions of Equation (\ref{ZeroEqn}) are viscosity solutions.
\end{theorem}
\begin{proof}
Let $\phi$ be a test function such that $u-\phi$ has a local maximum at $x$. We may also suppose that $u(y)\leq \phi(y)$ for all $y\in \mathbb{R}^n-\{x\}$. Recalling our definition of $\Delta^{-1}$, we have
$$\Delta^{-1}(u-\phi)(x)= \int_{\mathbb{R}^n} (u-\phi)(y)\Psi(x-y)dy\leq 0$$
It follows that $\Delta^{-1}u\leq \Delta^{-1}\phi$. Moreover, since we have a local maximum at $x$, we have $Du(x)=D\phi(x)$ and $D^2u\leq D^2 \phi$. By our assumptions on $F$ we conclude that:
$$F(x,u(x),\Delta^{-1}\phi(x),D\phi(x),D^2\phi(x))\leq F(x,u(x),\Delta^{-1}u(x),Du(x),D^2u(x))\leq0$$
whence we conclude that $u$ is a viscosity solution to the same problem.

\end{proof}
\begin{theorem}[Perron Method]
Let $f$ and $g$ be respectively a viscosity subsolution and viscosity supersolution to our problem.
Suppose that $f_\ast>-\infty$ and $g^\ast<\infty$. If $f\leq g$ and $F$ satisfies the hypotheses placed on Equation (\ref{ZeroEqn}), then there exists a viscosity solution $u$ with $f\leq u \leq g$.
\end{theorem}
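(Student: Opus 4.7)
The plan is to follow the classical Perron construction, with the usual care taken so that the nonlocal operator $\Delta^{-1}$ behaves compatibly with the bump-up step. Let
$$\mathcal{F} := \{w : f \leq w \leq g,\ w \text{ is a viscosity subsolution of }(\ref{ZeroEqn})\},$$
which is nonempty since $f \in \mathcal{F}$. Define
$$u(x) := \sup\{w(x) : w \in \mathcal{F}\}.$$
Since every member of $\mathcal{F}$ lies below $g^*$, we have $u \leq g^* < \infty$, and similarly $u \geq f_* > -\infty$. By Theorem 5.2, $u^*$ is a viscosity subsolution of the same problem, and because $u \leq g$ on all of $\mathbb{R}^n$ and $g^*$ is upper semicontinuous, we still have $u^* \leq g^*$; in particular $u^* \in \mathcal{F}$ (up to the semicontinuity technicality).

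The substance is to show that $u_*$ is a viscosity supersolution. I would argue by contradiction. Suppose there is a smooth $\varphi$ and $\hat{x} \in \mathbb{R}^n$ such that $u_* - \varphi$ has a strict local minimum (equal to zero) at $\hat{x}$, but
$$F^*\bigl(\hat{x},\Delta^{-1}\varphi(\hat{x}),u_*(\hat{x}),D\varphi(\hat{x}),D^2\varphi(\hat{x})\bigr) < 0.$$
Then I build a local perturbation
$$\varphi_{\delta,\gamma}(x) := \varphi(x) + \delta - \gamma |x-\hat{x}|^2,$$
for small $\delta,\gamma > 0$. By Lemma 5.1, $\Delta^{-1}\varphi_{\delta,\gamma}$ depends continuously on $(\delta,\gamma)$, so by continuity of $F$ the strict inequality persists on a small ball $B_r(\hat{x})$, with $r$ chosen small enough that $\varphi_{\delta,\gamma}$ is a classical (hence, by Theorem 5.3, viscosity) subsolution on $B_r(\hat{x})$. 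Choosing $\delta \ll \gamma r^2$, I also arrange $\varphi_{\delta,\gamma} < u_*$ on an annular neighborhood of $\partial B_r(\hat{x})$, so that the function
$$\widetilde{u}(x) := \begin{cases} \max\bigl(u(x),\varphi_{\delta,\gamma}(x)\bigr) & x \in B_r(\hat{x}), \\ u(x) & \text{otherwise,}\end{cases}$$
agrees with $u$ near $\partial B_r(\hat{x})$ and is therefore a viscosity subsolution on all of $\mathbb{R}^n$ (using Theorem 5.2 locally on $B_r(\hat{x})$ together with the gluing across the boundary). Since $u_*(\hat{x}) = \varphi(\hat{x})$, there exists $x_0$ arbitrarily close to $\hat{x}$ with $u(x_0) < \varphi(\hat{x}) + \delta/2 \leq \varphi_{\delta,\gamma}(x_0)$, so $\widetilde{u}(x_0) > u(x_0)$, contradicting the maximality defining $u$. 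One must also verify $\widetilde{u} \leq g$: the standard argument is that $g$ being a supersolution prevents $\varphi_{\delta,\gamma}$ from exceeding $g$ at the bump point, for otherwise one would extract a test function contradicting $g$'s supersolution condition at a suitable nearby maximum.

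The main obstacle is precisely the nonlocality of $\Delta^{-1}$: the standard Perron argument uses purely local modifications, whereas here replacing $\varphi$ by $\varphi_{\delta,\gamma}$ changes $\Delta^{-1}\varphi$ everywhere. The resolution is that (i) the perturbation $\varphi_{\delta,\gamma} - \varphi$ is a fixed smooth function whose $\Delta^{-1}$ transform is continuous by Lemma 5.1, and (ii) the subsolution inequality only requires evaluation of $\Delta^{-1}\varphi_{\delta,\gamma}$ at the single point $\hat{x}$, where continuity in $(\delta,\gamma)$ suffices. The monotonicity hypothesis that $F$ is non-increasing in the $\Delta^{-1}w$ variable is used in the gluing step: when comparing $\widetilde{u}$ against a test function at a point where the max is attained by $\varphi_{\delta,\gamma}$, one uses $\Delta^{-1}\widetilde{u} \geq \Delta^{-1}u$ combined with the monotonicity to preserve the subsolution inequality. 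Once $u_*$ is shown to be a supersolution and $u^*$ a subsolution, and since $u^* \geq u_*$ automatically, the function $u$ (or a suitable semicontinuous envelope) is the desired viscosity solution sandwiched between $f$ and $g$.
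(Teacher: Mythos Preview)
Your proposal is correct and follows essentially the same Perron construction as the paper: define the family of subsolutions bounded above by $g$, take the supremum, invoke Theorem~5.2 to obtain a subsolution, and argue by contradiction via a local bump-up to establish the supersolution property, using the supersolution hypothesis on $g$ to keep the bumped function below $g$. The only cosmetic difference is your choice of bump $\varphi + \delta - \gamma|x-\hat{x}|^2$ versus the paper's $\phi + \delta^4$ (after first normalizing so that $u_*(y) - \phi(y) \geq |y-x|^4$ near $\hat{x}$); your discussion of how the nonlocal term $\Delta^{-1}$ interacts with the bump is actually more explicit than what appears in the paper.
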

\begin{proof}
Let 
$$\mathcal{F}= \{\textrm{subsolutions } v : v\leq g\}.$$
This is a nonempty set by hypothesis. Thus, if we define 
$$u(x)= \sup \{v(x) : v\in \mathcal{F}\}$$
and use our previous theorem, we get another subsolution $u$ satisfying $u\leq g$. Now we show that $u$ is a supersolution of the same problem. Pick a test function $\phi$ such that $u_\ast -\phi$ has a relative minimum at $x$ and such that $u_\ast-\phi>0$ on $\mathbb{R}^n-\{x\}$. Locally, we may also assume that 
$$u_\ast(y)-\phi(y) \geq \abs{y-x}^4$$ on some small ball $B_r(x)$.
Assume by contradiction that $u$ is not a supersolution, and in fact that
$$F(x,u_\ast(x),\Delta^{-1}\phi(x),D\phi(x),D^2\phi(x)])>0.$$
Define a new function $w_\delta=\max\{\phi+\delta^4,u\}$. We show that for sufficiently small $\delta>0$, $w_\delta$ is a viscosity subsolution of our problem, the set $\{x : w(x)>u(x)\}$ is nonempty, and $w\leq g$. It follows that $w\in \mathcal{F}$ and is larger than $u$, a contradiction. \\
First, since $F$ is continuous, we can choose $\delta>0$ small enough so that
$$F(x,\phi(x)+\delta^4, \Delta^{-1}\phi,D\phi,D^2\phi)\leq 0$$ on $B_{2\delta}(x)$. It follows, by our monotonicity hypotheses on $F$, that $\phi+\delta^4$ is also a viscosity subsolution, whence $w$ is a subsolution by a previous theorem. That there exists a point $y$ where $w(y)>u(y)$ is clear since there must exist a sequence $x_n$ such that $u(x_n)\to u_\ast(x)$ and it is evident that
$$u(x_n)<\phi(x_n)+\delta^4\leq w(x_n).$$
We use our assumption in this contradiction proof to show that $w\leq g$. For this it suffices to show that for a small $\delta$ we have
$$\phi+\delta^4 \leq g \textrm{ on } B_\delta(x).$$
Then we can conclude that $u_\ast(x)=\phi(x)<g_\ast(x)$. Assume rather that $u_\ast(x)=g_\ast(x)$, so that $g_\ast-\phi$ has a local minimum at $x$, and by our assumption that $g_\ast$ is a viscosity subsolution that 
$$F(x,g_\ast(x),\Delta^{-1}\phi(x),D\phi(x),D^2(x))\geq 0$$
which contradicts our initial assumption.
\end{proof}
Lastly, we provide an existence theorem for our general equation.
\begin{theorem}
Assume the hypotheses of the previous theorem, and assume also that the given subsolution $f$ and the given supersolution $g$ are also in $L^p$ where $p>n/2$.
Then 
$$F(x,u,\Delta u, D(\Delta u), D^2(\Delta u))=0$$
has a solution in $W^{2,p}$.
\end{theorem}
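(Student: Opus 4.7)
The plan is to combine the Perron-type existence provided by Theorem 5.3 with standard elliptic regularity for the Poisson equation. Step one: applying Theorem 5.3 to the integro-differential equation in system (\ref{ZeroEqn}) yields a viscosity solution $w$ with $f \leq w \leq g$. Since $f, g \in L^p(\mathbb{R}^n)$ and $\abs{w} \leq \abs{f} + \abs{g}$ pointwise, we obtain $w \in L^p(\mathbb{R}^n)$.

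Step two: set $u := \Delta^{-1} w = w \ast \Psi_n$, so that $\Delta u = w$ in the distributional sense by the defining property of the Newtonian kernel. Standard Calder\'on-Zygmund theory (realized via the Riesz transforms) supplies $\|D^2 u\|_{L^p} \leq C\|w\|_{L^p}$ for every $1 < p < \infty$, while $Du$ and $u$ itself are controlled on bounded sets via local estimates for the Newton potential. This places $u$ in $W^{2,p}_{loc}(\mathbb{R}^n)$. Since $p > n/2$, Morrey's embedding applied on balls yields $u \in C^{0,\alpha}_{loc}$ with $\alpha = 2 - n/p$ (taking the fractional part when $p \leq n$), which furnishes the claimed H\"older regularity. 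By the definition of viscosity solution from the beginning of Section 5, $u$ is then a viscosity solution of $F(x, u, \Delta u, D(\Delta u), D^2(\Delta u)) = 0$, since by construction $w = \Delta u$ solves the associated integro-differential equation in the viscosity sense and $\Delta^{-1} w = u$.

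The main technical obstacle is the global behavior of the Newton potential on $\mathbb{R}^n$. The kernel $\Psi_n(z) \sim K_n \abs{z}^{2-n}$ is not integrable at infinity and belongs to no $L^q(\mathbb{R}^n)$, so the pointwise convolution $w \ast \Psi_n$ need not converge for a general $w \in L^p$. Consequently the conclusion that $u \in W^{2,p}$ must be read as $W^{2,p}_{loc}$ unless one imposes sufficient decay on $f$ and $g$ (hence on $w$) so that the Newtonian integral is globally well-defined. A rigorous route is to approximate $w$ by compactly supported truncations $w_k$, solve $\Delta u_k = w_k$ explicitly with $u_k = w_k \ast \Psi_n$, extract a locally convergent subsequence using the uniform Calder\'on-Zygmund bound $\|D^2 u_k\|_{L^p(B_R)} \leq C(R)\|w\|_{L^p}$, and pass to the distributional limit; Morrey's embedding applied uniformly on compact sets then delivers the H\"older continuous $u$ satisfying $\Delta u = w$. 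A secondary subtlety is the measurability of the Perron solution $w$, which follows from the fact that Perron's construction produces a function whose upper and lower semicontinuous envelopes coincide on a set where the comparison is invoked, hence is Borel measurable with $\abs{w} \leq \abs{f} + \abs{g} \in L^p$.
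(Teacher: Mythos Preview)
Your approach is essentially identical to the paper's: apply the Perron-type theorem to obtain a viscosity solution $w$ of the integro-differential equation with $f\leq w\leq g$, deduce $w\in L^p$ from the sandwich, then solve $\Delta u=w$ and invoke $L^p$ elliptic regularity plus Sobolev/Morrey embedding. The paper's proof is in fact much terser than yours and does not discuss the global-versus-local issue for the Newton potential or the measurability of the Perron solution; your added commentary on these points is reasonable, but the underlying argument matches the paper's.
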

\begin{proof}
As mentioned before, we consider our equation as a system of coupled equations:
$$F(x, \Delta^{-1}w, w, Dw, D^2w)=0$$
$$\Delta u = w$$
By our assumptions and the previous theorem, there is a viscosity solution $w$ to the first equation that is, moreover, in $L^p$ with $p>n/2$. We can then solve the second, Poisson equation, in the distributional sense for a function $u$ that is in $W^{2,p}$.
\end{proof}
\subsection{Nonlinear Eigenvalue Problem for the $\infty$-Bilaplacian}
In this subsection we always assume that $n\geq 3$.
Here we provide an example where we apply the existence theorem just proven. Consider the eigenvalue problem, where $0\leq f(x)\leq M$ is continuous.
\begin{equation}
\lambda u+(\Delta v)^3\abs{D(\Delta v)}^2=f(x)
\end{equation}
We express the equation as the coupled system:
\begin{equation}
\begin{gathered}
\lambda \Delta^{-1}w +w^3\abs{Dw}^2=f(x)\\
\Delta u=w
\end{gathered}
\end{equation}
The authors of \cite{JLM} apply the theory of viscosity solutions to a nonlinear eigenvalue problem associated to the second order $\infty$-Laplacian equation. We extend the range of the theory of viscosity solutions by dealing with an eigenvalue problem associated to the third order $\infty$-Bilaplacian equation. We prove:
\begin{theorem}
Let $n\geq 3$. For any $\lambda<0$ and for any continuous and bounded function $f$ with support on some open annulus centered at the origin, there is a viscosity solution in $C^{1,\alpha}(\mathbb{R}^n)$ for all $\alpha>0$ to:
$$\lambda u +(\Delta u)^3\abs{D(\Delta u)}^2=f(x).$$
\end{theorem}
\begin{proof}
Since $\lambda<0$, the monotonicity assumptions of our previous theorem are satisfied. It remains to find a subsolution and supersolution that are in $L^p$ for all $p>n/2$.We let $\mu, \nu, M>0$ be the positive constants such that $0\leq f(x)\leq M$ for all $x\in \mathbb{R}^n$ and $f\equiv 0$ away from the open annulus $\{ \nu< \abs{x} < \mu\}$.\\

We claim that $w=0$ is a subsolution. Indeed, if $(w-\phi)(y)$ has a local maximum of zero at some $x$ and $0 = w<\phi$ everywhere else, we have that:
$$0=w^3\abs{D\phi}^2\leq 0\leq f(x)+\Delta^{-1}\phi.$$

We claim that, for an appropriate constant $N$, the following function $w(x)$ is our desired supersolution:
$$w(x):= \begin{cases} 
0, & 0\leq \abs{x} \leq \nu\\
N \abs{x}^{1/2}, & \nu < \abs{x} < \mu \\
0, & \mu \leq \abs{x} \end{cases}
 $$
 First we notice by inspection that $w$ as constructed is lower-semi-continuous. Now suppose that $(w-\phi)(y)$ has a local minimum of zero at some $x$ and that $w(y)>\phi(y)$ for all $y\neq x$. Without loss of generality we may assume that $\phi$ is smooth and compactly supported in a ball of volume equal to $1$. We deal with three cases.
 
 In the first case, we have that $0\leq \abs{x} \leq \nu$, so that $w(x)=f(x)=0$. Then we just have to show that
 $$\lambda \Delta^{-1}\phi(x) \geq 0,$$
 which is evident since $\lambda<0$ and $\phi(y)\leq w(x)=0$ for all $y\in \mathbb{R}^n$. In the second case, when $\mu\leq \abs{x}$, we proceed in exactly the same manner. Thus, the only case remaining is when $\nu < \abs{x} < \mu$, which is what we assume for the rest of the proof.
 
 The function $w$ is continuously differentiable in an open neighborhood of $x$. Moreover, we see that
 $$\abs{Dw}^2(x) = 4N^2\abs{x}^{-1}$$
 so that
 $$w(x)^3\abs{Dw}^2(x) = 4N^5 \abs{x}^{1/2}.$$
 It follows that we must show 
$$4N^5 \abs{x}^{1/2} \geq f(x)-\lambda\Delta^{-1}\phi(x).$$
Since $f$ is bounded, it would suffice to prove $4N^5 \abs{x}^{1/2} \geq M-\lambda\Delta^{-1}\phi(x)$. We begin by appropriately bounding the Newtonian potential of $\phi$. 

For a dimensional constant $C_n$ we write:
$$\Delta^{-1}\phi(x) = C_n \int_{\abs{x-y}\leq 1} \frac{\phi(y)dy}{\abs{x-y}^{n-2}}+ C_n \int_{\abs{x-y}\geq 1} \frac{\phi(y)dy}{\abs{x-y}^{n-2}}.$$
 Thus, for another dimensional constant $C_n'$ we have
 $$\abs{\Delta^{-1}\phi(x)} \leq C_n \abs{\int \phi(y)dy} + C_n' \abs{\phi(x)}.$$
 Since $\phi(y)\leq w(y)$ for all $y$ and since the volume of the support of $\phi$ was assumed equal to $1$, we conclude that
 $$\abs{\Delta^{-1}\phi(x) }\leq C w(x)$$
 where we have abused notation slightly and let $C= C_n + C_n'$ be another dimensional constant. 

To finish the proof that $w$ is a supersolution, we now need only find a constant $N$ such that
$$4N^5 \abs{x}^{1/2} \geq M -\lambda C N \abs{x}^{1/2}$$
for all $x$ with $\nu <\abs{x} < \mu$. This, however, amounts to simply choosing $N$ so large that:
$$4N^5 +\lambda C N \geq \frac{M}{\sqrt{\nu}},$$
which we can certainly do. Our final observation is that $w$ as constructed is in $L^p(\mathbb{R}^n)$ for any $p$, since it is a bounded continuous function with compact support.

Finally, solving the Poisson equation $\Delta u= w$ and using the Sobolev embedding, we may conclude that the original eigenvalue problem has a positive viscosity solution that is in $C^{1,\alpha}(\mathbb{R}^n)$ for all $\alpha$.
\end{proof}

\end{document}